\begin{document}

\newtheorem{theorem}[subsection]{Theorem}
\newtheorem{proposition}[subsection]{Proposition}
\newtheorem{lemma}[subsection]{Lemma}
\newtheorem{corollary}[subsection]{Corollary}
\newtheorem{conjecture}[subsection]{Conjecture}
\newtheorem{prop}[subsection]{Proposition}
\numberwithin{equation}{section}
\newcommand{\mr}{\ensuremath{\mathbb R}}
\newcommand{\mc}{\ensuremath{\mathbb C}}
\newcommand{\dif}{\mathrm{d}}
\newcommand{\intz}{\mathbb{Z}}
\newcommand{\ratq}{\mathbb{Q}}
\newcommand{\natn}{\mathbb{N}}
\newcommand{\comc}{\mathbb{C}}
\newcommand{\rear}{\mathbb{R}}
\newcommand{\prip}{\mathbb{P}}
\newcommand{\uph}{\mathbb{H}}
\newcommand{\fief}{\mathbb{F}}
\newcommand{\majorarc}{\mathfrak{M}}
\newcommand{\minorarc}{\mathfrak{m}}
\newcommand{\sings}{\mathfrak{S}}
\newcommand{\fA}{\ensuremath{\mathfrak A}}
\newcommand{\mn}{\ensuremath{\mathbb N}}
\newcommand{\mq}{\ensuremath{\mathbb Q}}
\newcommand{\half}{\tfrac{1}{2}}
\newcommand{\f}{f\times \chi}
\newcommand{\summ}{\mathop{{\sum}^{\star}}}
\newcommand{\chiq}{\chi \bmod q}
\newcommand{\chidb}{\chi \bmod db}
\newcommand{\chid}{\chi \bmod d}
\newcommand{\sym}{\text{sym}^2}
\newcommand{\hhalf}{\tfrac{1}{2}}
\newcommand{\sumstar}{\sideset{}{^*}\sum}
\newcommand{\sumprime}{\sideset{}{'}\sum}
\newcommand{\sumprimeprime}{\sideset{}{''}\sum}
\newcommand{\shortmod}{\ensuremath{\negthickspace \negthickspace \negthickspace \pmod}}
\newcommand{\V}{V\left(\frac{nm}{q^2}\right)}
\newcommand{\sumi}{\mathop{{\sum}^{\dagger}}}
\newcommand{\mz}{\ensuremath{\mathbb Z}}
\newcommand{\leg}[2]{\left(\frac{#1}{#2}\right)}
\newcommand{\muK}{\mu_{\omega}}

\title[Weighted First Moments of some special quadratic Dirichlet $L$-functions]{Weighted first moments of some special quadratic Dirichlet $L$-functions}

\date{\today}
\author{Peng Gao and Liangyi Zhao}

\begin{abstract}
In this paper, we obtain asymptotic formulas for weighted first moments of central values of families of primitive quadratic Dirichlet $L$-functions whose conductors comprise only primes that split in a given quadratic number field.  We then deduce a non-vanishing result of these $L$-functions at the point $s=1/2$.
\end{abstract}

\maketitle

\noindent {\bf Mathematics Subject Classification (2010)}: 11M06, 11M41 \newline

\noindent {\bf Keywords}: quadratic Dirichlet characters, quadratic Hecke characters, quadratic Dirichlet $L$-functions, Hecke $L$-functions

\section{Introduction}

It is a conjecture due to S. Chowla \cite{chow} that a Dirichlet $L$-function is never zero at the central point $s=1/2$.  One way to address this problem is by studying the moments of central values of $L$-functions.  For the family of quadratic Dirichlet $L$-functions, M. Jutila \cite{Jutila} obtained the first and second moments of $L(1/2, \chi_d)$ with $\chi_d$ being the Kronecker symbol.  The error term in the asymptotic formula for the first moment in \cite{Jutila} was later improved in \cites{DoHo, MPY, ViTa}. For the second and third moment of this quadratic family, K. Soundararajan obtained asymptotic formulas with power savings in \cite{sound1}. The error term for the third moment was improved by A. Diaconu, D. Goldfeld and J. Hoffstein \cite{DGH} and later further improved by M. P. Young \cite{MPY2}.   More recently, an explicit lower order term in the third moment was found in \cite{DiWh} and under the assumption of the generalized Riemann hypothesis for Dirichlet $L$-function, an asymptotic formula for the fourth moment was proved in \cite{Shen}.  For families of Dirichlet $L$-functions associated with characters of higher orders, we note that S. Baier and M. P. Young studied the first and second moments of $L(1/2, \chi)$ for cubic Dirichlet $L$-functions in \cite{B&Y}.    With the knowledge of these moments, one can deduce, in manners not unlike the proof of Corollary~\ref{nonvan}, results on the non-vanishing of the $L$-functions under consideration. \newline

   In this paper, we study the first moments of central values of certain subfamilies of quadratic Dirichlet $L$-functions. Our result is motivated by the class field theory, which implies that when a number field is Galois over $\mq$, then the set of prime numbers in $\mq$ that split completely in it determines the number field uniquely.  For this reason, it is interesting to study the families of primitive quadratic Dirichlet $L$-functions whose conductors comprise only primes that split in a given number field. \newline

    We now let $K$ be a quadratic number field and let $S(K)$ be the set of odd rational integers that comprises only prime factors that split completely in $K$, i.e.
\begin{align*}
   S(K) & =\{q \in \mz : (q,2)=1, p|q \Rightarrow p  \text { splits completely in } K \}.
\end{align*}
  For technical reasons, other than a smooth weight, we consider the average of the central values of $L$-functions with an extra weight which essentially measures the number of distinct rational prime factors of the conductor of a given character. For $q \in \mz$, let $\omega(q)$ be the number of distinct rational prime factors of $q$.  Then, our result is
\begin{theorem}
\label{firstmoment}
  Let $K$ be a quadratic number field and let $w: (0,\infty) \rightarrow \mr$ be a smooth, compactly supported function.  Then for any $\varepsilon>0$,
\begin{equation}
\label{eq:1}
 \sum_{q \in S(K)}\;  \sumstar_{\substack{\chi \bmod{q} \\ \chi^2 = \chi_0}} 2^{\omega(q)} L(1/2, \chi) w\leg{q}{Q} =  Q P_K(\log Q) +O \left( Q^{1-\delta_0+ \varepsilon}+Q^{3/4 + \varepsilon} \right),
\end{equation}
where $P_K(x)$ is a linear function whose coefficients depend only on $K$ and $w$ (see \eqref{const} below for the expression for $P_K(x)$), $\delta_0$ is the currently best known constant in the subconvexity bound for a degree two $L$-function over $\mq$ (see \eqref{Lbound} below).  Here the $*$ on the sum over $\chi$ restricts the sum to primitive characters and the implicit constant in the error term depends on $K$, $w$ and $\varepsilon$.
\end{theorem}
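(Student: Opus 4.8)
The plan is to reduce \eqref{eq:1} to a smoothly weighted first moment over fundamental conductors and to encode the splitting condition through the quadratic character attached to $K$. First I would note that an odd modulus $q$ carries a primitive character $\chi$ with $\chi^2=\chi_0$ exactly when $q$ is squarefree, in which case $\chi$ is the Jacobi symbol $\leg{\cdot}{q}$; thus the inner sum over $\chi$ collapses to one term for each admissible $q$. Writing $\psi_K$ for the quadratic Dirichlet character attached to $K/\mq$, so that an odd prime $p$ unramified in $K$ splits completely iff $\psi_K(p)=1$, membership $q\in S(K)$ for squarefree odd $q$ coprime to the discriminant is detected by $\prod_{p\mid q}\tfrac{1+\psi_K(p)}{2}=2^{-\omega(q)}\sum_{d\mid q}\psi_K(d)$. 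The weight $2^{\omega(q)}$ is present precisely to cancel this $2^{-\omega(q)}$, turning the left side of \eqref{eq:1} into
\begin{equation*}
\sum_{q\text{ odd}}\mu^2(q)\sum_{d\mid q}\psi_K(d)\,L\Bigl(\tfrac12,\leg{\cdot}{q}\Bigr)w\leg{q}{Q},
\end{equation*}
up to the bounded contribution of the finitely many ramified primes. Setting $q=d\ell$ with $(d,\ell)=1$ exhibits this as a sum over $d$, weighted by $\psi_K(d)\mu^2(d)$, of a smooth quadratic first moment over the subfamily indexed by $\ell$.

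Next I would insert the approximate functional equation, writing $L(\tfrac12,\leg{\cdot}{q})$ as a rapidly convergent sum $\sum_n n^{-1/2}\leg{n}{q}V(n/\sqrt q)$ plus a dual term of the same shape, with the sign depending on $q\bmod 4$. Interchanging summation, the core object is the character sum $\sum_\ell\mu^2(\ell)\leg{n}{\ell}w(d\ell/Q)$ over squarefree $\ell$ coprime to $2dn$. Detecting squarefreeness by $\mu^2(\ell)=\sum_{a^2\mid\ell}\mu(a)$ and applying Poisson summation on arithmetic progressions, the diagonal $n=\square$ is where $\leg{n}{\ell}$ becomes principal and the $\ell$-sum contributes a term of size $Q/d$. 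Collecting the diagonal over $n=m^2$ and over $d$, and recognising the arising Dirichlet series — in particular $\sum_d\psi_K(d)d^{-s}=L(s,\psi_K)=\zeta_K(s)/\zeta(s)$ — a contour shift meets a pole of order two at $s=1$, whose residue produces the linear polynomial $P_K$ of \eqref{const}; its leading and constant coefficients carry $L(1,\psi_K)$ and the Mellin data of $w$.

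The substance of the argument lies in the off-diagonal $n\neq\square$, where I expect a clean split. The part coming from $d=1$ is just the classical smooth first moment of the quadratic family over odd squarefree $q$; after Poisson its dual sums are controlled by the usual square-root-of-conductor bound, contributing the error $Q^{3/4+\varepsilon}$. The part with $d>1$ genuinely carries the twist $\psi_K$, and this is the hard step: because the splitting condition ties the modulus to $K$, the summation assembles into a degree-two $L$-function over $\mq$ evaluated on the critical line — the automorphic induction from $K$ of a Hecke character, equivalently a Hecke $L$-function over $K$. Its convexity bound would return only an error of size $Q^{1+\varepsilon}$, and the passage to $Q^{1-\delta_0+\varepsilon}$ is exactly what the subconvexity bound \eqref{Lbound} buys. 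This degree-two subconvexity input is the main obstacle and the reason $\delta_0$ appears in \eqref{eq:1}.

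Combining the diagonal main term with the two off-diagonal estimates yields $QP_K(\log Q)+O(Q^{1-\delta_0+\varepsilon}+Q^{3/4+\varepsilon})$. The routine bookkeeping — restricting to $q$ coprime to $2\,\mathrm{disc}(K)$, tracking the dual term of the approximate functional equation, and justifying the contour shifts and the convergence of the Euler products defining the coefficients of $P_K$ — I would defer to the detailed computation.
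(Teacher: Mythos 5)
Your outline captures the correct skeleton and the two essential mechanisms --- the diagonal $n=\square$ producing a double pole and hence the linear polynomial $P_K$, and the off-diagonal assembling into a degree-two $L$-function over $\mq$ whose subconvexity bound supplies the $\delta_0$ saving --- but your route differs from the paper's in the combinatorial setup and in the analytic machinery. You stay over $\mq$ and detect the splitting condition by $2^{-\omega(q)}\sum_{d\mid q}\psi_K(d)$, so that the weight $2^{\omega(q)}$ cancels the detector; the paper instead lifts the whole family to $K$ (Lemma~\ref{lemma:quarticclass}): there are exactly $2^{\omega(q)}$ square-free ideals $\mathcal{A}$ of norm $q$, coprime to $2D_K$ and with no rational prime divisor, inducing the given character, so the weight is absorbed by the ideal count and the modulus sum literally becomes the coefficient sum of a Hecke $L$-function $L(s,\psi_{4mD_K^2d^2})$ over $K$. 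These are two factorizations of the same object (your $\prod_{p\mid q}(1+\psi_K(p))$ Euler product is $L(s,\chi_n)L(s,\psi_K\chi_n)$, i.e.\ the automorphic induction you name), but the paper's version needs no Poisson summation at all: everything is done by Mellin inversion and contour shifts. In particular, your attribution of the $Q^{3/4+\varepsilon}$ term to a Poisson analysis of the $d=1$ off-diagonal does not match the paper, where that term arises from shifting the main-term integral $\int Q^{s/2}\zeta(1+s)\widetilde{w}(1+s/2)\gamma_1(s)\,\dif s/s$ to $\Re s=-1/2+\varepsilon$, and \emph{all} non-square $m$ are estimated uniformly by the Michel--Venkatesh bound \eqref{Lbound}, with no $d=1$ versus $d>1$ split.

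Two small inaccuracies worth noting. First, your detector $\prod_{p\mid q}\tfrac{1+\psi_K(p)}{2}$ does not vanish on $q$ divisible by ramified primes (each such factor is $1/2$, not $0$), so the restriction to $(q,D_K)=1$ must be imposed at the outset rather than treated as a correction; you flag this in the bookkeeping, but it is a restriction, not a ``bounded contribution.'' Second, convexity does not merely return $Q^{1+\varepsilon}$: since the conductor of the relevant character is only $\ll mD_K^2$ with $m\ll Q^{1/2+\varepsilon}$, the convexity bound already gives roughly $Q^{7/8+\varepsilon}$ for the off-diagonal; subconvexity improves this further, and the paper simply records the weaker but sufficient bound $Q^{1-\delta_0+\varepsilon}$. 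Neither point invalidates your approach, which with the deferred computations filled in would yield the stated asymptotic.
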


As alluded to earlier, one can readily deduce the following non-vanishing result from the above theorem.

\begin{corollary} \label{nonvan}
Let $Q \in \natn$ and sufficiently large.  We have
\[ \# \left\{ \chi : \chi^2 = \chi_0, \; \chi \; \mbox{with conductor} \; q, \; q \in S(K) \cap [1, Q], \; L (1/2, \chi) \neq 0 \right\} \gg \frac{Q}{\log^{17} Q} . \]
\end{corollary}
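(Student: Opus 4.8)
I plan to deduce Corollary~\ref{nonvan} from Theorem~\ref{firstmoment} by a Cauchy--Schwarz argument in which the first moment furnishes a lower bound, while an unweighted count and a weighted second moment supply the two factors. Fix a non-negative weight $w$ supported in $(0,1]$ and not identically zero, chosen so that the leading coefficient of $P_K$ in \eqref{const} is positive, and write $N(Q)$ for the quantity to be bounded from below. Because any $\chi$ with $L(1/2,\chi)=0$ makes no contribution, the sum
\[ S := \sum_{q \in S(K)} \sumstar_{\substack{\chi \bmod q \\ \chi^2 = \chi_0}} 2^{\omega(q)} L(1/2,\chi)\, w\!\leg{q}{Q} \]
is unchanged on restricting to those $\chi$ with $L(1/2,\chi)\neq 0$; applying Cauchy--Schwarz to the restricted sum yields $S^2 \leq N(Q)\, M_2$, where
\[ M_2 := \sum_{q \in S(K)} \sumstar_{\substack{\chi \bmod q \\ \chi^2 = \chi_0}} 4^{\omega(q)} L(1/2,\chi)^2\, w\!\leg{q}{Q}^2 , \]
the factor $4^{\omega(q)}$ arising from squaring the weight $2^{\omega(q)}$. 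Since $w$ is supported in $(0,1]$, only conductors $q \leq Q$ occur, so the number of $\chi$ contributing to the restricted sum is at most $N(Q)$.

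By Theorem~\ref{firstmoment} and the choice of $w$, one has $S \gg Q\log Q$: the main term $Q P_K(\log Q)$ has size $\asymp Q\log Q$, while the error $O(Q^{1-\delta_0+\varepsilon}+Q^{3/4+\varepsilon})$ is of strictly smaller order once $\delta_0>0$. It therefore remains to establish an upper bound of the shape $M_2 \ll Q(\log Q)^{19}$. To this end I would first drop the condition $q \in S(K)$, which is harmless as every summand is non-negative, thereby reducing $M_2$ to a weighted second moment over the full family of primitive quadratic characters. Into each square $L(1/2,\chi)^2$ I would insert an approximate functional equation, turning it into a short Dirichlet polynomial in $\chi$ with divisor-function coefficients, and then bound the resulting averages over quadratic characters by the large sieve for real characters in the form due to Heath--Brown. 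The weight $4^{\omega(q)}=d(q)^2$ on squarefree $q$ is multiplicative and contributes only further powers of $\log Q$, which can be tracked by standard mean-value estimates.

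Combining the two inputs gives $N(Q) \geq S^2/M_2 \gg (Q\log Q)^2/\big(Q(\log Q)^{19}\big) = Q/\log^{17}Q$, which is the assertion of the corollary. I expect the bound for $M_2$ to be the main obstacle: one must extract genuine powers of $\log Q$ and cannot afford the crude estimate $4^{\omega(q)}\ll_\varepsilon q^{\varepsilon}$, for that would forfeit a factor $Q^{\varepsilon}$ and ruin the clean power of $\log$ in the final count. Managing the divisor weight in tandem with the large sieve so that only logarithmic factors are lost is thus the heart of the matter, whereas the Cauchy--Schwarz step and the extraction of the first-moment lower bound are routine.
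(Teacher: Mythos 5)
Your Cauchy--Schwarz skeleton and the extraction of the lower bound $S \gg Q\log Q$ from Theorem~\ref{firstmoment} are both sound, but the argument has a genuine gap exactly where you suspect it: the bound $M_2 \ll Q(\log Q)^{19}$ for the weighted second moment $\sum_q \sumstar_{\chi} 4^{\omega(q)} L(1/2,\chi)^2 w^2(q/Q)$ is nowhere in the literature in citable form, and you do not prove it. Deducing it would require redoing a second-moment computation (approximate functional equation plus Heath--Brown's quadratic large sieve) while carrying the multiplicative weight $4^{\omega(q)}=\tau^2(q)$ through every step and verifying that it costs only a bounded power of $\log Q$; that is a substantive piece of analysis, comparable in length to the rest of the paper, not a routine appeal to known results. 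As written, the corollary is not established.

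The paper sidesteps precisely this difficulty by replacing Cauchy--Schwarz with H\"older's inequality with exponents $(4,4,2)$: the nonvanishing count enters to the power $1/4$, the divisor weight is isolated in a factor $\bigl(\sum_{q\le Q}\sumstar_{\chi}\tau^4(q)\bigr)^{1/4} \ll Q^{1/4}\log^{15/4}Q$ (a classical mean value, \cite[(1.80)]{iwakow}), and the $L$-function is isolated in the \emph{unweighted} second moment $\bigl(\sum_{q\le Q}\sumstar_{\chi}L^2(1/2,\chi)\bigr)^{1/2} \ll Q^{1/2}\log^{3/2}Q$, which is exactly Jutila's Theorem 2. Both factors are thus off-the-shelf citations, and the exponents combine to give $N(Q)^{1/4} \gg Q^{1/4}\log^{-17/4}Q$, i.e.\ the stated $Q/\log^{17}Q$. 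Your two-factor decomposition would, if the weighted second moment were proved with a small enough power of $\log$, yield a sharper exponent than $17$; but to repair your proof at the stated strength you should either supply the full weighted second-moment estimate or switch to the three-factor H\"older decoupling.
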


\begin{proof}
It is well-known that a primitive quadratic Dirichlet character with odd conductor $q$ coincides with the Jacobi symbol modulo $q$ and $q$ must be square-free (see \cite[p.40]{Da}).  Note also that $2^{\omega(q)} = \tau(q)$ if $q$ is square-free.  Here $\tau(q)$ is the divisor function.  Therefore, using Theorem~\ref{firstmoment} and H\"older's inequality and choose a smooth function $w$ with support in $[0,1]$, we get
\[ Q \log Q \ll \left( \sum_{\substack{ q \in S(K) \\ q \leq Q}} \sumstar_{\substack{\chi \bmod{q} \\ \chi^2 = \chi_0 \\ L(1/2,\chi) \neq 0}} 1 \right)^{1/4} \left( \sum_{q \leq Q} \ \sumstar_{\substack{\chi \bmod{q} \\ \chi^2 = \chi_0}} \tau^4(q) \right)^{1/4} \left( \sum_{q \leq Q} \ \sumstar_{\substack{\chi \bmod{q} \\ \chi^2 = \chi_0}} L^2(1/2,\chi) \right)^{1/2}  . \]
The second factor above is $O(Q^{1/4} \log^{15/4} Q)$(see \cite[(1.80)]{iwakow}).  Using Theorem 2 of \cite{Jutila}, the third factor is $O(Q^{1/2} \log^{3/2} Q)$.  The corollary follows from these estimates.
\end{proof}

Before presenting the proof of Theorem~\ref{firstmoment}, we will give a brief summary of it.  We start by applying the approximate functional equation to the left-hand side of \eqref{eq:1}.  Those Dirichlet characters under consideration can be identified with Hecke characters in $K$ (see Lemma~\ref{lemma:quarticclass}).  Using Mellin inversion, we are led to study sums involving Hecke $L$-functions.  Moving the contour to the left, we will, in certain cases, encounter some poles whose residues will give rise to the main term in \eqref{eq:1}.  The remaining terms can all be estimated to give admissible error terms.  Among other things, a subconvexity bound for Hecke $L$-functions is needed in the analysis.

\subsection{Notations} The following notations and conventions are used throughout the paper.\\
\noindent  $K $ denotes a quadratic number field. \newline
\noindent  $\mathcal{O}_K $ denotes the ring of integers in $K$. \newline
\noindent  $D_K $ denotes the discriminant of $K$. \newline
$f =O(g)$, $f \ll g$ or $g\gg f$ means $|f| \leq cg$ for some unspecified positive constant $c$. \newline
$\mu_{K}$ denotes the M\"obius function on $K$. \newline
$\zeta_{K}(s)$ is the Dedekind zeta function for $K$.

\section{Preliminaries}
\label{sec 2}

In this section, we enumerate the tools used throughout the paper.
\subsection{Quadratic symbol and primitive quartic Dirichlet characters}
\label{sec2.4}
   For any prime ideal $\mathfrak{p} \subset  \mathcal{O}_K$ which is co-prime to $(2)$, we define for
   $a \in \mathcal{O}_K$, $(a, \mathfrak{p})=1$ by $\leg{a}{\mathfrak{p}}_K \equiv
a^{(N(\mathfrak{p})-1)/2} \pmod{\mathfrak{p}}$, with $\leg{a}{\mathfrak{p}}_K \in \{ \pm 1 \}$. When
$\mathfrak{p} | (a)$, we set
$\leg{a}{\mathfrak{p}}_K =0$.  Then this symbol, $\leg {\cdot}{\mathcal{A} }_K$, can be extended multiplicatively
to any ideal $\mathcal{A} \subset  \mathcal{O}_K$ with $(\mathcal{A}, 2)=1$ and is called the quadratic residue symbol in $K$. \newline

    For any $m \in \mz$ and any ideal $\mathcal{A} \subset  \mathcal{O}_K$ with $(\mathcal{A}, 2D_K)=1$, it follows from \cite[Proposition 4.2]{Lemmermeyer} that
\begin{align}
\label{2.1}
    \leg {m}{\mathcal{A}}_K=\leg {m}{N(\mathcal{A}) }_{\mq},
\end{align}
    where $\leg {\cdot}{\cdot}_{\mq}$ denotes the Jacobi symbol in $\mq$. \newline

    In particular, if $p$ is an odd prime in $\mq$ that splits completely in $K$ and $\mathfrak{p}$ is a prime ideal in $\mathcal{O}_K$ lying above $(p)$, then for any $m \in \mz$,
\begin{align}
\label{2.2}
    \leg {m}{\mathfrak{p} }_K=\leg {m}{p}_{\mq}.
\end{align}

  When $K$ is a quadratic number field, it is well-known from algebraic number theory (see \cite[pp. 111, 117]{F&A}) that a prime ideal $(p)$ in $\mz$ can either ramify, split (completely) or stay inert in $\mathcal{O}_K$. Moreover, a prime ideal $p$ in $\mz$ ramifies in $K$ if and only if $p$ divides $D_K$ (see \cite[Theorem 22]{F&A}). It follows from this and \eqref{2.2} that we have the following classification of all the primitive quadratic Dirichlet characters of conductor $q \in S(K)$:
\begin{lemma}
\label{lemma:quarticclass}
Primitive quadratic Dirichlet characters of conductor $q \in S(K)$ are of the form $\chi_{\mathcal{A}}: m \rightarrow (\frac{m}{\mathcal{A}})$ for some ideal $\mathcal{A} \subset \mathcal{O}_K $, $\mathcal{A}$ square-free, co-prime to $2D_K$ and not divisible by any rational primes, with norm $N(\mathcal{A}) = q$. Moreover, there are precisely $2^{\omega(q)}$ different ideals in $\mathcal{O}_K$ satisfying the above conditions that give rise to the same Dirichlet character.
\end{lemma}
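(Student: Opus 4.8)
First I would reduce everything to the classical Jacobi symbol. Recall the standard fact (already invoked in the proof of Corollary~\ref{nonvan}) that a primitive quadratic Dirichlet character of odd conductor $q$ exists only when $q$ is square-free, in which case it is unique and coincides with the Jacobi symbol $\leg{\cdot}{q}_{\mq}$. Hence it suffices to exhibit this Jacobi symbol in the form $\chi_{\mathcal{A}}$ for the ideals $\mathcal{A}$ described, and then to count those ideals. Since such characters occur only for square-free $q$, I may assume throughout that $q$ is square-free; for non-square-free $q \in S(K)$ both sides of the count are vacuous.

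Next I would enumerate the admissible ideals. Writing $q = p_1 \cdots p_r$ with $r = \omega(q)$, each $p_i$ is odd and, by the definition of $S(K)$, splits completely in $K$. Splitting primes are unramified, so $p_i \nmid D_K$, whence each $p_i$ is coprime to $2D_K$, and $(p_i) = \mathfrak{p}_i \overline{\mathfrak{p}}_i$ for two distinct prime ideals of norm $p_i$. I would then argue that any ideal $\mathcal{A}$ which is square-free, coprime to $2D_K$, not divisible by any rational prime, and of norm $q$ must be a product of degree-one primes, exactly one lying above each $p_i$: an inert or ramified prime cannot contribute a square-free norm dividing $q$ (and ramified primes are excluded by coprimality to $D_K$), while taking both $\mathfrak{p}_i$ and $\overline{\mathfrak{p}}_i$ would force the rational prime $(p_i) \mid \mathcal{A}$, which is excluded. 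Thus $\mathcal{A} = \prod_{i=1}^{r} \mathfrak{q}_i$ with $\mathfrak{q}_i \in \{\mathfrak{p}_i, \overline{\mathfrak{p}}_i\}$; since $\mathfrak{p}_i \neq \overline{\mathfrak{p}}_i$ and the primes over distinct $p_i$ are distinct, unique factorization of ideals makes these $2^r = 2^{\omega(q)}$ products pairwise distinct.

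Finally I would compute the associated character. For each admissible $\mathcal{A}$ we have $(\mathcal{A}, 2D_K) = 1$, so \eqref{2.1} applies and gives, for every $m \in \mz$,
\[
  \chi_{\mathcal{A}}(m) = \leg{m}{\mathcal{A}}_K = \leg{m}{N(\mathcal{A})}_{\mq} = \leg{m}{q}_{\mq}.
\]
This shows at once that every $\chi_{\mathcal{A}}$ equals the Jacobi symbol $\leg{\cdot}{q}_{\mq}$ — hence is the unique primitive quadratic character of conductor $q$ — and that this common value does not depend on the choices of $\mathfrak{q}_i$. Combined with the first step, this identifies the primitive quadratic characters of conductor $q \in S(K)$ precisely with the $\chi_{\mathcal{A}}$ and shows that exactly $2^{\omega(q)}$ ideals $\mathcal{A}$ give rise to each of them.

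The substantive input is \eqref{2.1}: it is exactly the fact that the quadratic symbol in $K$ depends on $\mathcal{A}$ only through $N(\mathcal{A})$ that forces all $2^{\omega(q)}$ ideals to yield the \emph{same} Dirichlet character. I expect the main obstacle to lie not in this computation but in the bookkeeping of the second step — checking that the four conditions on $\mathcal{A}$ single out exactly the products of one split prime above each $p_i$, so that there is neither over- nor under-counting — together with the standard but essential point that $\leg{\cdot}{q}_{\mq}$ is genuinely primitive of conductor $q$ when $q$ is odd and square-free.
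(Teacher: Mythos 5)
Your argument is correct and follows essentially the same route the paper intends (the paper leaves the proof as a one-line remark deducing the lemma from the split/inert/ramified trichotomy and \eqref{2.2}, which is exactly the special case of \eqref{2.1} you invoke). Your more detailed bookkeeping — restricting to square-free $q$, identifying the admissible $\mathcal{A}$ as products of one prime above each $p_i$, and noting that \eqref{2.1} makes $\chi_{\mathcal{A}}$ depend only on $N(\mathcal{A})$ — is a faithful expansion of that sketch.
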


\subsection{The approximate functional equation}
   We have the following approximate functional equation from \cite[Theorem 5.3]{iwakow}):
\begin{prop}
\label{prop:AFE}
Let $\chi$ be a primitive Dirichlet character $\chi$ of conductor $q$. For any $\alpha \in \mc, j \in \{ \pm 1 \}$, let
\begin{equation*}
 a_j=\frac {1-j}2, \quad  \epsilon(\chi) = i^{-a_{\chi(-1)}} q^{-1/2} \tau(\chi),
\end{equation*}
where $\tau(\chi)$ is the Gauss sum associated with $\chi$.  We define
\begin{equation} \label{vdef}
  V_{j}(x) = \frac{1}{2\pi i} \int\limits_{(2)}\gamma_{j}(s) x^{-s} \frac {\dif s}{s}, \quad \text{where} \quad \gamma_{j}(s) = \pi^{-s/2} \frac{\Gamma\left(\tfrac{1/2 + a_j+ s}{2}\right)}{\Gamma\left(\tfrac{1/2 + a_j}{2}\right)}.
\end{equation}
  Furthermore, let $A$ and $B$ be positive real numbers such that $AB = q$.  Then we have
\begin{equation} \label{approxfunc}
L(1/2, \chi) = \sum_{m=1}^{\infty} \frac{\chi(m)}{m^{1/2}} V_{\chi(-1)}\left(\frac{m}{A}\right) + \epsilon(\chi)\sum_{m=1}^{\infty} \frac{\overline{\chi}(m)}{m^{1/2}} V_{\chi(-1)}\left(\frac{m}{B}\right).
\end{equation}
\end{prop}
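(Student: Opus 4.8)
The plan is to realize each of the two sums on the right-hand side of \eqref{approxfunc} as a contour integral and then to produce $L(1/2,\chi)$ as a residue. First I would insert the definition \eqref{vdef} of $V_j$ into the first sum, interchange the (absolutely convergent) sum and integral on the line $\Re s = 2$, and recognize the resulting Dirichlet series as $L(1/2+s,\chi)$. Writing $j = \chi(-1)$, this gives
\[
 \sum_{m=1}^{\infty}\frac{\chi(m)}{m^{1/2}} V_{j}\left(\frac{m}{A}\right) = \frac{1}{2\pi i}\int_{(2)} \gamma_{j}(s) L(1/2+s,\chi) A^{s}\,\frac{\dif s}{s} =: I_A,
\]
and the identical computation expresses the second sum as the same integral with $\chi$ and $A$ replaced by $\overline{\chi}$ and $B$, which I call $I_B$.

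Next I would shift the contour in $I_A$ from $\Re s = 2$ to $\Re s = -2$. Since $\gamma_{j}(0)=1$, the only singularity crossed is the simple pole of $1/s$ at $s=0$, whose residue is exactly $L(1/2,\chi)$, the quantity I want to isolate. On the shifted line I would substitute $s\mapsto -s$ and invoke the functional equation of the completed $L$-function $\Lambda(s,\chi) = (q/\pi)^{s/2}\Gamma\!\left(\tfrac{s+a_j}{2}\right)L(s,\chi)$, namely $\Lambda(s,\chi) = \epsilon(\chi)\Lambda(1-s,\overline{\chi})$, which is valid precisely because $\chi$ is primitive; here one uses $\overline{\chi}(-1)=\chi(-1)$ so that the two gamma factors carry the same parameter $a_j$. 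A direct manipulation of the gamma factors then yields the clean identity
\[
 \gamma_{j}(-s)\,L(1/2-s,\chi) = \epsilon(\chi)\, q^{s}\, \gamma_{j}(s)\, L(1/2+s,\overline{\chi}),
\]
and since $q^{s}A^{-s}=B^{s}$ by $AB=q$, the integral over $\Re s = -2$ equals $-\epsilon(\chi) I_B$. Consequently $I_A = L(1/2,\chi) - \epsilon(\chi) I_B$, and rearranging this gives exactly \eqref{approxfunc}.

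The only step requiring genuine care is the justification of the contour shift, that is, showing that the integrals over the horizontal segments $\Im s = \pm T$ tend to $0$ as $T\to\infty$. For this I would appeal to Stirling's formula, which shows that $\gamma_{j}(s)$ decays exponentially in $|\Im s|$ within any fixed vertical strip, together with the standard convexity bound controlling the polynomial growth of $L(1/2+s,\chi)$; their product forces the horizontal contributions to vanish. This is the main obstacle, though a routine one; the remainder of the argument is simply the bookkeeping of matching the gamma factors arising from the functional equation against those defining $\gamma_{j}$ in \eqref{vdef}.
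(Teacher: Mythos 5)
Your proposal is correct and is the standard contour-shift proof of the approximate functional equation; the paper offers no proof of this proposition, citing it directly from Iwaniec--Kowalski (Theorem 5.3), whose argument is exactly the one you give (unfold the Dirichlet series into $I_A$, shift to $\Re s=-2$ picking up $L(1/2,\chi)$ at $s=0$, and apply the functional equation $\gamma_j(-s)L(1/2-s,\chi)=\epsilon(\chi)q^s\gamma_j(s)L(1/2+s,\overline{\chi})$ on the far line). The only point worth making explicit is that the poles of $\Gamma\left(\tfrac{1/2+a_j+s}{2}\right)$ in $-2<\Re s<0$ are cancelled by the trivial zeros of $L(1/2+s,\chi)$ (equivalently, the integrand away from $s=0$ is a multiple of the entire completed $L$-function), so that $s=0$ is indeed the only singularity crossed.
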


Note that (see \cite[Lemma 2.1]{sound1}) $V_{\pm 1}(\xi)$ are real-valued and smooth on $[0, \infty)$ and for the $l$-th derivative of $V_{\pm 1}(\xi)$, we have
\begin{equation} \label{2.07}
      V_{\pm 1}\left (\xi \right) = 1+O(\xi^{1/2-\epsilon}) \; \mbox{for} \; 0<\xi<1   \quad \mbox{and} \quad V^{(l)}_{\pm 1}\left (\xi \right) =O(e^{-\xi}) \; \mbox{for} \; \xi >0, \; l \geq 0.
\end{equation}
We remark here that the estimates in \eqref{2.07} are only proved for $V_{+1}$ in \cite{sound1} and the proof for $V_{-1}$ is similar and one get the same bounds with different implied constants.

\section{Proof of Theorem \ref{firstmoment}}
\label{sec3}

  We let
\begin{equation*}
\mathcal{M} := \sum_{q \in S(K)}\;  \sumstar_{\substack{\chi \bmod{q} \\ \chi^2 = \chi_0}} 2^{\omega(q)}  L(1/2, \chi) w\leg{q}{Q}.
\end{equation*}

   Applying the approximate functional equation \eqref{approxfunc} with $A=B = \sqrt{q}$ gives, noting that it follows from \cite[Chap. 2]{Da} that $\epsilon(\chi)=1$ when $\chi$ is quadratic,
\begin{align*}
 \mathcal{M} = 2 \sum_{q \in S(K)}\;  \sumstar_{\substack{\chi \bmod{q} \\ \chi^2 = \chi_0}}2^{\omega(q)} \sum_{m=1}^{\infty} \frac{\chi(m)}{\sqrt{m}} V_{\chi(-1)}\leg{m}{\sqrt{q}} w\left(\frac{q}{Q}\right).
\end{align*}

   Applying the above with Lemma \ref{lemma:quarticclass} again, we have $\mathcal{M} =\mathcal{M}^+ +\mathcal{M}^-$, with
\begin{align*}
 \mathcal{M}^{\pm} = \sumprime_{\substack {\mathcal{A}}}(1 \pm \chi_{\mathcal{A}}(-1)) \sum_{m=1}^{\infty} \frac{\chi_{\mathcal{A}}(m)}{\sqrt{m}}  V_{\chi_{\mathcal{A}}(-1)}\leg{m}{\sqrt{N(\mathcal{A})}} w\left(\frac{N(\mathcal{A})}{Q}\right),
\end{align*}
  where the dash on the sum over $\mathcal{A}$ indicates that the sum runs over square-free ideals of $\mathcal{O}_K$ that are co-prime to $2D_K$ and without rational prime divisor. \newline

It remains to evaluate $\mathcal{M}^{\pm}$. As the arguments are similar, we will only evaluate $\mathcal{M}^{+}$ in the sequel.  The results are summarized by
\begin{lemma}
\label{lemma1}
 We have
\begin{equation}
\label{eq:M1estimate}
 \mathcal{M}^{\pm} = QP^{\pm}_{K}(\log Q)+O(Q^{1-\delta_0 + \varepsilon}+Q^{3/4 + \varepsilon}),
\end{equation}
  where $\delta_0$ is given as in Theorem \ref{firstmoment}, $P^{\pm}_{K}(x)$ are given in \eqref{const} below.
\end{lemma}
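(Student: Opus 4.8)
The plan is to evaluate $\mathcal{M}^+$, the argument for $\mathcal{M}^-$ being identical, by decoupling the variables $m$ and $N(\mathcal{A})$, performing the sum over $\mathcal{A}$ through a Dirichlet series built from Hecke $L$-functions, and finally separating a main term from an error term according to whether $m$ is a perfect square. Since $1 + \chi_{\mathcal{A}}(-1)$ vanishes unless $\chi_{\mathcal{A}}(-1)=1$, I would first replace $V_{\chi_{\mathcal{A}}(-1)}$ by $V_{+1}$ throughout at no cost. Writing $\Phi_m(x) := V_{+1}(m/\sqrt{x})\, w(x/Q)$, which is smooth and compactly supported in $x$, and using its Mellin transform, the sum over $\mathcal{A}$ is recovered as a contour integral of the generating Dirichlet series
\[ D(s,m) := \sumprime_{\mathcal{A}} \chi_{\mathcal{A}}(m)\, N(\mathcal{A})^{-s}, \]
together with the analogous twisted series in which $\chi_{\mathcal{A}}(-1)\chi_{\mathcal{A}}(m) = \chi_{\mathcal{A}}(-m)$ arises from the $\chi_{\mathcal{A}}(-1)$ part of the factor.

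Next I would analyse $D(s,m)$. For fixed $m$ the symbol $\mathcal{A} \mapsto \leg{m}{\mathcal{A}}_K$ is multiplicative, so $D(s,m)$ has an Euler product; the square-free condition, the coprimality to $2D_K$, and the absence of rational prime divisors each contribute only finite or absolutely convergent correction factors. I would handle the square-free constraint by writing its indicator as $\sum_{\mathcal{B}^2 \mid \mathcal{A}} \mu_K(\mathcal{B})$ and then factor the resulting Euler product against the Hecke $L$-function $L(s,\chi_m^{(K)})$ with $\chi_m^{(K)}(\mathfrak{p}) = \leg{m}{\mathfrak{p}}_K$. By \eqref{2.1} this symbol depends only on $N(\mathfrak{p})$, so $L(s,\chi_m^{(K)})$ factors as a product of two Dirichlet $L$-functions over $\mq$, which is the degree-two object named in the statement. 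I would then shift the contour to the left of $\Re(s) = 1$.

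The decisive dichotomy is whether $m$ is a perfect square. When $m = \ell^2$, the character $\chi_m^{(K)}$ is principal on ideals coprime to $\ell$, so $L(s,\chi_m^{(K)})$ inherits the simple pole of $\zeta_K$ at $s = 1$; the residue collected in the contour shift, summed against the weight $m^{-1/2} = \ell^{-1}$ and the cutoff built into $\Phi_m$, contributes the main term $Q P^+_K(\log Q)$, whose linear polynomial I would read off to match \eqref{const}. When $m$ is not a square, $L(s,\chi_m^{(K)})$ is entire, and after shifting to the critical line the remaining integral is bounded using the rapid decay of the Mellin transform of $\Phi_m$ and of $\gamma_{+1}$ together with the subconvexity bound \eqref{Lbound}.

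The main obstacle is this last estimate. The conductor of $\chi_m^{(K)}$ grows with $m$, so the subconvexity bound must be applied uniformly in the conductor, and the $m$-sum, effectively of length $Q^{1/2+\varepsilon}$ by the exponential decay of $V_{+1}$, has to be balanced against this conductor dependence. Carrying the M\"obius-$\mu_K$ sieving for the square-free condition and the finite correction factors through the estimate without losing the savings is what produces the two competing contributions $Q^{1-\delta+\varepsilon}$ and $Q^{3/4+\varepsilon}$; assembling these with the main term yields \eqref{eq:M1estimate}.
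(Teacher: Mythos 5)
Your proposal follows essentially the same route as the paper: Mellin inversion to bring in Hecke $L$-functions, M\"obius sieving for the square-free and no-rational-prime-divisor conditions, a main term extracted from the pole at $s=1$ when $m$ is a square, and the subconvexity bound \eqref{Lbound} for non-square $m$. The only differences are organizational --- you absorb the sieving conditions into Euler-product correction factors where the paper keeps explicit outer sums over $d$ and $\frak{l}$ (so you would need to stop the shift at $\Re(s)=1/2+\varepsilon$ where those factors still converge), and the $Q^{3/4+\varepsilon}$ term in fact arises from the secondary contour shift in the square-$m$ main-term analysis rather than from the sieving --- and these do not change the substance.
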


\subsection{Evaluating $\mathcal{M}^+$, the main term}
\label{section:M1}
  We detect the condition that $\mathcal{A}$ has no rational prime divisor using the formula
\begin{equation*}
 \sum_{\substack{(d) |\mathcal{A} \\ d \in \mz }} \mu_{\mz}(d) =
\begin{cases}
 1, \quad \text{$\mathcal{A}$ has no rational prime divisor}, \\
 0, \quad \text{otherwise}.
\end{cases}
\end{equation*}
Here we define $\mu_{\mz}(d) = \mu(|d|)$, the usual M\"{o}bius function.  We apply this formula and change variables $\mathcal{A} \rightarrow d\mathcal{A}$ to the sum over $\mathcal{A}$.  Since $(d)$ is square-free as an ideal of $\mathcal{O}_K$, the condition that $d\mathcal{A}$ is square-free then simply means that $\mathcal{A}$ is square-free and $(d,\mathcal{A}) = 1$.  Thus
we have $\mathcal{M}^+=\mathcal{M}^+_1+\mathcal{M}^+_2$, where
\begin{align*}
 \mathcal{M}^+_1 &= \sum_{\substack{(d), \; d \in \mz \\ (d,2D_K)=1}} \mu_{\mz}(d) \sum_{m=1}^{\infty} \leg{m}{d}_K \frac{1}{\sqrt{m}} \sumstar_{\substack{\mathcal{A} \\ (\mathcal{A},2dD_K) = 1 }} \leg{m}{\mathcal{A}}_K V_1\left(\frac{m}{Q^{1/2}} \frac{Q^{1/2}}{\sqrt{N(d\mathcal{A})}} \right) w\left(\frac{N(d\mathcal{A})}{Q}\right), \\
 \mathcal{M}^+_2 &= \sum_{\substack{(d), \; d \in \mz \\ (d,2D_K)=1}} \mu_{\mz}(d) \sum_{m=1}^{\infty} \leg{-m}{d}_K \frac{1}{\sqrt{m}} \sumstar_{\substack{\mathcal{A} \\ (\mathcal{A},2dD_K) = 1 }} \leg{-m}{\mathcal{A}}_K V_1\left(\frac{m}{Q^{1/2}} \frac{Q^{1/2}}{\sqrt{N(d\mathcal{A})}} \right) w\left(\frac{N(d\mathcal{A})}{Q}\right),
\end{align*}
  where the asterisks indicate that $\mathcal{A}$ run over square-free ideals of $\mathcal{O}_K$.  \newline

  We evaluate $\mathcal{M}^+_{1}$ first. Using M\"{o}bius inversion to detect the condition that $\mathcal{A}$ is square-free, we get
\begin{equation*}
 \mathcal{M}^+_{1} = \sum_{\substack{(d), \; d \in \mz \\ (d,2D_K)=1}} \mu_{\mz}(d) \sum_{\substack{ \frak{l} \\ (\frak{l},2dD_K) = 1}} \mu_{K}(\frak{l}) \sum_{m=1}^{\infty} \leg{m}{d \frak{l}^2}_K \frac{1}{\sqrt{m}} \mathcal{M}_1(d,\frak{l},m),
\end{equation*}
where
\begin{equation*}
 \mathcal{M}_1(d,\frak{l},m) = \sum_{\substack{\mathcal{A} \\ (\mathcal{A},2dD_K) = 1}} \leg{m}{\mathcal{A}}_K V_1\left(\frac{m}{Q^{1/2}} \frac{Q^{1/2}}{\sqrt{N(d\frak{l}^2\mathcal{A})}} \right)  w\left(\frac{N(d\frak{l}^2\mathcal{A})}{Q}\right).
\end{equation*}
Next we use the Mellin transform of the weight function to express the sum over $\mathcal{A}$ as a contour integral involving the Hecke $L$-function.  By Mellin inversion,
\begin{equation*}
 V_1\left(\frac{m}{Q^{1/2}} \frac{Q^{1/2}}{\sqrt{N(d\frak{l}^2\mathcal{A})}} \right)  w\left(\frac{N(d\frak{l}^2\mathcal{A})}{Q}\right) = \frac{1}{2 \pi i} \int\limits_{(2)} \leg{Q}{N(d\frak{l}^2\mathcal{A})}^s \widetilde{f}(s) \dif s,
\end{equation*}
where
\begin{equation*}
\widetilde{f}(s) = \int\limits_0^{\infty} V_1\left(\frac{m}{Q^{1/2}} x^{-1/2} \right) w(x) x^{s-1} \dif x.
\end{equation*}
Integration by parts and using \eqref{2.07} shows $\widetilde{f}(s)$ is a function satisfying the bound for all $\text{Re}(s) >0$, $E\in \natn$,
\begin{equation} \label{ftildebound}
 \widetilde{f}(s) \ll \left( 1 + |s| \right)^{-E} \left( 1 + m/Q^{1/2} \right)^{-E}.
\end{equation}
Here the implied constant depends on $E$. \newline

   We then have
\begin{equation*}
 \mathcal{M}_1(d, \frak{l},m) = \frac{1}{2 \pi i} \int\limits_{(2)} \leg{Q}{N(d\frak{l}^2)}^s L(s, \psi_{4mD^2_Kd^2}) \widetilde{f}(s) \dif s,
\end{equation*}
   where
\begin{equation*}
 L(s, \psi_{4mD^2_Kd^2}) =\sum_{\substack{0 \neq \mathcal{A} \subset \mathcal{O}_K}}\psi_{4mD^2_Kd^2}(\mathcal{A}) N(\mathcal{A})^{-s},
\end{equation*}
   and
\begin{equation} \label{psidef}
   \psi_{4mD^2_Kd^2}(\mathcal{A}) := \begin{cases}
     \leg{4mD^2_Kd^2}{\mathcal{A}}_K \qquad & \text{when $(\mathcal{A}, 2D_K)=1$}, \\
     0 \qquad & \text{otherwise}.
    \end{cases}
\end{equation}
   Note that via \eqref{2.1} and the quadratic reciprocity law in $\mq$, there exists a positive integer $e$ independent of $m, d$ such that  $\psi_{4mD^2_Kd^2}((a))=1$ for any $a \in \mathcal{O}_K$ satisfying $a \equiv 1 \pmod {md^2(2D_K)^e}$. It follows from \cite[p. 470]{Newkirch} that $\psi_{4mD^2_Kd^2}$ can be regarded as a Hecke character of trivial infinite type $\pmod {md^2(2D_K)^e}$. \newline

We estimate $\mathcal{M}^+_{1}$  by moving the contour to the line with $\Re s =1/2$.  When $m$ is a square the Hecke $L$-function has a pole at $s=1$.  We set $\mathcal{M}_0$ to be the contribution to $\mathcal{M}^+_{1}$ of these residues, and $\mathcal{M}_1'$ to be the remainder. \newline

  We evaluate $\mathcal{M}_0$ first. Note that
\begin{equation*}
 \mathcal{M}_0 = \sum_{\substack{(d), d \in \mz \\ (d,2D_K)=1}} \mu_{\mz}(d) \sum_{\substack{ \frak{l} \\ (\frak{l},2dD_K) = 1}} \mu_{K}(\frak{l}) \sum_{m=1}^{\infty} \leg{m}{d \frak{l}^2}_K\frac{1}{\sqrt{m}} \frac{Q}{N(d\frak{l}^2)} \widetilde{f}(1) \text{Res}_{s=1} L(s, \psi_{4mD^2_Kd^2}),
\end{equation*}
where using the Mellin inversion formula yields
\begin{equation*}
 \widetilde{f}(1) = \int\limits_0^{\infty}  V_1\left(\frac{m}{Q^{1/2}} x^{-1/2} \right) w(x) \dif x = \frac{1}{2 \pi i} \int\limits_{(2)} \leg{Q^{1/2}}{m}^s \widetilde{w} \left( 1+\frac s2 \right) \gamma_1(s) \frac {\dif s}{s},
\end{equation*}	
  with $\gamma_1(s)$ defined in \eqref{vdef} and
\begin{align*}
\widetilde{w}(s) = \int\limits_0^{\infty} w(x) x^{s-1} \dif x.
\end{align*}

From our discussions above, it is not difficult to see that $\psi_{4mD^2_Kd^2}$ is the principal character only if $m$ is a square, in which case
\begin{equation*}
 L(s, \psi_{4mD^2_Kd^2}) = \zeta_{K}(s) \prod_{\mathfrak{p} | 2dmD_K} \left( 1 - N(\mathfrak{p})^{-s} \right).
\end{equation*}
  Here and in what follows, we use $\mathfrak{p}$ or $\mathfrak{p}_i$ to denote prime ideals in $\mathcal{O}_K$. \newline

Let $C_{K,1}$ be the residue of $\zeta_{K}(s)$ at $s=1$, then
\begin{equation*}
 \mathcal{M}_0 =  C_{K,1} Q \sum_{m=1}^{\infty} \frac{\widetilde{f}(1)}{m} \prod_{\mathfrak{p} | 2mD_K} \left( 1- N(\mathfrak{p})^{-1} \right)  \sum_{\substack{(d), \; d \in \mz \\ (d,2mD_K) =1}} \frac{\mu_{\mz}(d)}{d^2}\prod_{\mathfrak{p} | d} \left( 1 - N(\mathfrak{p})^{-1} \right) \sum_{\substack{\frak{l} \\ (\frak{l},2mdD_K)=1}} \frac{\mu_{K}(\frak{l})}{N(\frak{l}^2)}.
\end{equation*}

   Computing the sum over $l$ explicitly, we obtain
\begin{align*}
 \mathcal{M}_0 &= C_{K,1} \zeta^{-1}_{K}(2) Q \sum_{m=1}^{\infty} \frac{\widetilde{f}(1)}{m} \prod_{\mathfrak{p} | 2mD_K} \left( 1- N(\mathfrak{p})^{-1} \right)  \sum_{\substack{(d), \; d \in \mz \\ (d,2mD_K) =1}} \frac{\mu_{\mz}(d)}{d^2}\prod_{\mathfrak{p} | d} \left( 1 - N(\mathfrak{p})^{-1} \right) \prod_{\mathfrak{p}| 2mdD_K} (1- N(\mathfrak{p})^{-2})^{-1} \\
 &= C_{K,1} \zeta^{-1}_{K}(2) Q \sum_{m=1}^{\infty} \frac{\widetilde{f}(1)}{m}\prod_{\mathfrak{p} | 2mD_K} \left( 1+ N(\mathfrak{p})^{-1} \right)^{-1}  \sum_{\substack{(d) , \; d \in \mz \\ (d,2mD_K) =1 }} \frac{\mu_{\mz}(d)}{d^2}\prod_{\mathfrak{p} | d} \left( 1 + N(\mathfrak{p} )^{-1} \right)^{-1}.
\end{align*}

   We define
\begin{align*}
  C_{K,2}= \prod_{\mathfrak{p} | 2D_K} \left( 1+ N(\mathfrak{p} \right)^{-1})^{-1}\sum_{\substack{(d), \; d \in \mz \\ (d,2D_K)=1}} \frac{\mu_{\mz}(d)}{d^2}\prod_{\mathfrak{p} | d} \left( 1 + N(\mathfrak{p})^{-1} \right)^{-1}.
\end{align*}

   It is clear that $C_{K,2}$ is a constant. Using this and setting $\tilde{m} = m/(m,2D_k)$, we have
\begin{align*}
 \mathcal{M}_0 = C_{K,1}C_{K,2} \zeta^{-1}_{K}(2) Q \sum_{m=1}^{\infty} \frac{\widetilde{f}(1)}{m} \prod_{\mathfrak{p}_1 | \tilde{m}} \left( 1+ N(\mathfrak{p}_1 )^{-1} \right)^{-1}
\prod_{p | \tilde{m}} \left( 1-p^{-2}\prod_{\mathfrak{p}_2| p} \left( 1 + N(\mathfrak{p}_2)^{-1} \right)^{-1} \right)^{-1},
\end{align*}
where $p$ runs over rational primes.  Let
\begin{equation*}
 C_K(s) =\zeta^{-1}(s) \sum_{m=1}^{\infty} m^{-s}  \prod_{\mathfrak{p}_1 | \tilde{m}} \left( 1+ N(\mathfrak{p}_1)^{-1} \right)^{-1}
\prod_{p | \tilde{m}} \left( 1-p^{-2}\prod_{\mathfrak{p}_2| p} (1 + N(\mathfrak{p}_2)^{-1})^{-1} \right)^{-1},
\end{equation*}
  where $\zeta(s)$ is the Riemann zeta-function. Expressing $C_K(s)$ as an Euler product, one checks easily that $C_K(s)$ is holomorphic, converges absolutely for $\text{Re}(s) \geq 1/2 + \delta > 1$ and can be analytically continued to $\text{Re}(s) > 1/2$.  Then
\begin{equation*}
 \mathcal{M}_0 = C_{K,1}C_{K,2}\zeta^{-1}_{K}(2)  Q \frac{1}{2 \pi i} \int\limits_{(2)} Q^{s/2} C_K(1 + s)\zeta (1+s)\widetilde{w} \left( 1+\frac s2 \right) \gamma_1(s) \frac {\dif s}{s}.
\end{equation*}

  We move the contour of integration to $-1/2 + \varepsilon$, crossing a pole of order $2$ at $s=0$ only.  The new contour contributes $O(Q^{3/4 + \varepsilon})$, by noting that we have $\zeta(1+s) \ll |1+s|$ on this line (see (3)  on p. 79 of \cite{Da}). Using the fact that the Laurent expansion of $\zeta(s)$ at $s=1$
  has the form \cite[Corollary 1.16]{M&V}
\begin{equation*}
  \zeta(s)=\frac 1{s-1}+\gamma_0+\sum^{\infty}_{k=1}a_k(s-1)^k, \quad a_k \in \mc,
\end{equation*}
  where $\gamma_0$ is the Euler constant.
  The pole at $s=0$ gives $QP^+_{K}(\log Q)$, where we define
\begin{align}
\label{const}
   P_K(x) & = P^+_{K}(x)+ P^-_{K}(x), \; \mbox{with} \\
   P^+_{K}(x) &=C_{K,1}C_{K,2}\zeta^{-1}_{K}(2)\left(  \frac 12C_K(1)\widetilde{w}(1)x+C'_K(1)+\frac {\widetilde{w}'(1)}{2}+ \gamma'_1(0)+\gamma_0\left(C_K(1)+\widetilde{w}(1)\right) \right), \nonumber \\
   P^-_{K}(x) &=C_{K,1}C_{K,2}\zeta^{-1}_{K}(2)\left(  \frac 12C_K(1)\widetilde{w}(1)x+C'_K(1)+\frac {\widetilde{w}'(1)}{2}+ \gamma'_{-1}(0)+\gamma_0\left(C_K(1)+\widetilde{w}(1)\right) \right).  \nonumber
\end{align}

We then conclude that
\begin{align}
\label{m0}
 \mathcal{M}_0 = QP^+_{K}(\log Q)+O \left( Q^{3/4 + \varepsilon} \right).
\end{align}

\subsection{Evaluating $\mathcal{M}'_1$ and $\mathcal{M}^+_{2}$}
\label{section:remainderterm}

In this section, we estimate $\mathcal{M}'_1$ and $\mathcal{M}^+_{2}$.   Recalled that $\mathcal{M}_1'$ is $\mathcal{M}_1^+-\mathcal{M}_0$.  More preicisely,
\[ \mathcal{M}_1' = \frac{1}{2\pi i}  \sum_{\substack{(d), \; d \in \mz \\ (d,2D_K)=1}} \mu_{\mz}(d) \sum_{\substack{ \frak{l} \\ (\frak{l},2dD_K) = 1}} \mu_{K}(\frak{l}) \sum_{m=1}^{\infty} \leg{m}{d \frak{l}^2}_K \frac{1}{\sqrt{m}} \int\limits_{(1/2)} \leg{Q}{N(d\frak{l}^2)}^s L(s, \psi_{4mD^2_Kd^2}) \widetilde{f}(s) \dif s . \]

 By bounding everything with absolute values and using \eqref{ftildebound} to bound $\widetilde{f}$, we see that, for some large $E \in \natn$,
\begin{equation}
\label{M1'}
|\mathcal{M}_1'| \ll  \sum_{d \leq c_1 \sqrt{Q}} \sum_{N(\frak{l}) \leq c_2 \sqrt{Q}} \frac{1}{\sqrt{N(d\frak{l}^2)}} \sum_{m} \frac{\sqrt{Q}}{\sqrt{m}} \left( 1+m/Q^{1/2} \right)^{-E} \int\limits_0^{\infty} \left| L(1/2 + it, \psi_{4mD^2_Kd^2}) \right| (1+|t|)^{-E} \dif t.
\end{equation}
Here $c_1$ and $c_2$ are constants, chosen according to the size of the support the weight function $w$.  In view of the factor $(1+m/Q^{1/2})^{-E}$, we may truncate the sum over $m$ above to $m \leq M \ll Q^{1/2+\varepsilon}$ for $\varepsilon>0$ with a small error.   Indeed, if $m \gg Q^{1/2+\varepsilon}$, then
\[ \left( 1+ \frac{m}{Q^{1/2}} \right)^{-1} \ll \min \left\{ \frac{Q^{1/2}}{m} , \; \frac{1}{Q^{\varepsilon}} \right\} . \]
So taking $E$ large so that $\varepsilon E >10$, using the convexity found for the $L$-function and summing up all the other variables trivially, we see that the contribution to $M_1'$ from $m \gg Q^{1/2+\varepsilon}$ is
\[ \ll Q^{-5}  \]
which is negligible. \newline

To estimate the contribution from the small $m$'s, we need a better bound for $L \left( 1/2 + it, \psi_{4mD^2_Kd^2} \right)$.  The character $\psi_{4mD^2_Kd^2}$ is induced by a primitive character $\psi'$ with conductor $f$ satisfying $f | m(2D_K)^2$.  Note that from its definition in \eqref{psidef}, $\psi_{4mD^2_Kd^2}$ is induced by a character, not necessarily primitive, modulo $4mD_K^2$.  So the conductor $f$ here is independent of $d$.  It follows that for any $\varepsilon>0$,
\begin{align}
\label{Lbound0}
 L\left( 1/2 + it, \psi_{4mD^2_Kd^2} \right) \ll (md)^{\varepsilon}|L(1/2 + it, \psi')|,
\end{align}
where the implied constant here depends on $\varepsilon$ \newline

     Now, the Hecke $L$-function $L(s, \psi')$, viewed as a degree two $L$-function over $\mq$, has analytic conductor $\ll N(m) (1+ t^2)$ (see \cite[Theorem 12.5]{Iwaniec}). It follows from a result on the subconvexity bound for degree two $L$-functions over any fixed number field by P. Michel and A. Venkatesh \cite{Michel&Venkatesh} that we have an absolute constant $\delta_0>0$, independent of the number field, such that
\begin{align} \label{Lbound}
 L \left( 1/2 + it, \psi' \right) \ll (N(m)(1+t^2))^{1/4-\delta_0}.
\end{align}
Applying \eqref{Lbound0} and \eqref{Lbound}, we deduce the following estimation to bound the sum over $m$:
\begin{equation*}
 \sum_{m \leq M} \frac{1}{\sqrt{m}} \left| L \left(1/2 + it, \psi_{4mD^2_Kd^2} \right) \right| \ll d^{\epsilon} M^{1-2\delta_0+\varepsilon} (1+t^2)^{1/4-\delta_0}.
\end{equation*}

   We sum trivially over $d$ and $\frak{l}$ in \eqref{M1'} to see that
\begin{equation}
\label{eq:M1'}
|\mathcal{M}_1'| \ll Q^{1-\delta_0 + \varepsilon}.
\end{equation}

Using similar arguments, we obtain the same estimation for $\mathcal{M}^+_{2}$ as above.  Combining \eqref{eq:M1'} with \eqref{m0} gives \eqref{eq:M1estimate}.

\subsection{Conclusion}  As one readily deduces \eqref{eq:1} from Lemma \ref{lemma1}, this completes the proof of Theorem \ref{firstmoment}. \newline

\noindent{\bf Acknowledgments.} P. G. is supported in part by NSFC grant 11871082 and L. Z. by the FRG grant PS43707 and the Silverstar Fund PS49334 at UNSW.

\bibliography{biblio}
\bibliographystyle{amsxport}

\vspace*{.5cm}

\noindent\begin{tabular}{p{8cm}p{8cm}}
School of Mathematics and Systems Science & School of Mathematics and Statistics \\
Beihang University & University of New South Wales \\
Beijing 100191 China & Sydney NSW 2052 Australia \\
Email: {\tt penggao@buaa.edu.cn} & Email: {\tt l.zhao@unsw.edu.au} \\
\end{tabular}

\end{document}